\title{Self-adjoint elements in the pseudo-unitary group ${\bf U}\left(p,p\right)$}
\author{Sachin Munshi$^*$ and Rongwei Yang}
\thanks{
$^*$ Corresponding author}
\address[Sachin Munshi]{Department of Mathematics and Statistics, SUNY at Albany, Albany, NY 12222, U.S.A.}
\email[Sachin Munshi]{smunshi@albany.edu}
\address[Rongwei Yang]{Department of Mathematics and Statistics, SUNY at Albany, Albany, NY 12222, U.S.A. }
\email[Rongwei Yang]{ryang@albany.edu}
\date{}
\newtheorem{thm}{Theorem}[section]
\newtheorem{lem}[thm]{Lemma}
\newtheorem{cor}[thm]{Corollary}
\theoremstyle{definition}
\newtheorem{defn}[thm]{Definition}
\theoremstyle{plain}
\newtheorem{fact}[thm]{Fact}
\theoremstyle{definition}
\newtheorem{example}[thm]{Example}
\newtheorem{rem}[thm]{Remark}
\def\quotient#1#2{%
    \raise1ex\hbox{$#1$}\Big/\lower1ex\hbox{$#2$}%
}
\begin{document}

\maketitle

\begin{abstract}
The pseudo-unitary group ${\bf U}\left(p,q\right)$ of signature $\left(p,q\right)$ is the group of matrices that preserve the indefinite pseudo-Euclidean metric on the vector space $\mathbb{C}^{p,q}$. The goal of this paper is to describe the set ${\bf U}_{s}\left(p,p\right)$ of Hermitian, or, self-adjoint elements in ${\bf U}\left(p,p\right)$.

\textbf{Mathematics Subject Classification (2010)}: 15B57, 15Axx, 20G20.

\textbf{Key words}: pseudo-unitary group, self-adjoint matrix, spectral decomposition, Lie algebra, exponential map, singular value decomposition.

\end{abstract}

\section{Introduction}

The pseudo-unitary group ${\bf U}\left(p,q\right)$ is also often called an indefinite unitary group. It is a connected, non-compact Lie group defined by 
\[
{\bf U}\left(p,q\right):=\left\{ M\in{\bf GL}\left(n,\mathbb{C}\right):M^{*}I_{p,q}M=I_{p,q}\right\} ,
\]
 with $I_{p,q}:=\text{diag}\left\{ I_{p},-I_{q}\right\} $, where
$I_{p}$ is the identity matrix of size $p\times p$, and $n=p+q$.  It is the complex analogue of the pseudo-orthogonal
group 
\[
{\bf O}\left(p,q\right):=\left\{ M\in{\bf GL}\left(n,\mathbb{R}\right):M^{T}I_{p,q}M=I_{p,q}\right\} .
\]
 In particular, ${\bf O}\left(1,3\right)$ is the Lorentz group, i.e.
the group of Lorentz transformations associated to the Minkowski metric
$\eta:=\text{diag}\left\{ 1,-1,-1,-1\right\} $ for special relativity.
So then ${\bf U}\left(1,3\right)$ may be viewed as the complex Lorentz
group (\cite{San66}), and ${\bf U}\left(p,q\right)$ as the generalized
complex Lorentz group.

${\bf U}\left(p,q\right)$ also contains certain subgroups that are
of particular interest to researchers in modern math and physics.
One of them is the maximal compact subgroup ${\bf U}\left(p\right)\oplus{\bf U}\left(q\right)$
(\cite{Ner11}). As we shall see, this subgroup is precisely the set
of unitary elements inside the pseudo-unitary group. Another subgroup,
which is extensively studied in the physics literature, is 
\[
{\bf SU}\left(p,q\right):={\bf U}\left(p,q\right)\cap{\bf SL}\left(n,\mathbb{C}\right).
\]
 In particular, ${\bf SU}\left(1,1\right)$ is important for the coherent
states of the harmonic oscillator and the Coulomb problem, in the
study of path integrals in quantum mechanics (\cite{IKP92}). Moreover,
${\bf SU}\left(2,2\right)$ is the symmetry group of twistor space
developed by Roger Penrose, and it is locally isomorphic to ${\bf SO}\left(4,2\right)$
(\cite{ADM17}). For more information about ${\bf U}\left(p,q\right)$, and ${\bf U}\left(p,p\right)$ in particular, we refer readers to \cite{Lou01, Ner11, Por95, Sny97}.

This paper aims to describe the set ${\bf U}_{s}\left(p,q\right)$ of
self-adjoint (Hermitian) elements in ${\bf U}\left(p,q\right)$ for the case $p=q$. By
the Cartan decomposition theorem (\cite{Ner11}, Theorem 3.4), every
matrix $M\in{\bf U}\left(p,q\right)$ is of the form $M=US$, where
$U\in{\bf U}\left(p\right)\oplus{\bf U}\left(q\right)$ and $S$ is
self-adjoint. It is easy to check that $S\in{\bf U}_{s}\left(p,q\right)$,
and hence there is the following factorization: 
\begin{equation}
{\bf U}\left(p,q\right)=\left({\bf U}\left(p\right)\oplus{\bf U}\left(q\right)\right)\cdot{\bf U}_{s}\left(p,q\right).
\end{equation}
 This indicates that the structure of ${\bf U}\left(p,q\right)$ is
closely linked to that of ${\bf U}_{s}\left(p,q\right)$. However,
self-adjoint elements in the classical Lie groups  have not been
explicitly described in any mathematics literature to date.

This paper is organized in the following manner. Section 2 consists
of preliminary information and facts related to ${\bf U}\left(p,q\right)$;
for instance, it leaves invariant particular Hermitian bilinear and
quadratic forms (\cite{Mok89}). For this reason, we are primarily interested
in the Hermitian elements inside the pseudo-unitary group. Therefore,
in Section 3, we introduce the set ${\bf U}_{s}\left(p,q\right)$
of self-adjoint (Hermitian) matrix elements in ${\bf U}\left(p,q\right)$,
and study the structure of these matrices through vector analysis
and unitary transformations. As the main result of this paper, Theorem
3.7 gives a complete description for ${\bf U}_{s}\left(p,p\right)$. In Section 4,
we study ${\bf U}_{s}\left(p,q\right)$ from the perspective of Lie
algebras, and determine the range of the exponential map, again taking
particular care of the case $p=q$. We give our concluding remarks in 
Section 5.

\section{Preliminaries}

Throughout this paper, $I_{p,q}=\text{diag}\left\{ I_{p},-I_{q}\right\} $
denotes the diagonal block matrix of size $n\times n,n=p+q$, where
$I_{p}$ is the identity matrix of size $p\times p$, and ${\bf U}\left(p\right)$
is the group of unitary matrices of size $p\times p$, i.e., $U\in{\bf U}\left(p\right)$
if and only if $U^{*}U=I_{p}$, where $U^{*}$ is the conjugate transpose
(or adjoint) of $U$. We now give the definition of the pseudo-unitary
group of signature $\left(p,q\right)$.
\begin{defn}
The \textit{pseudo-unitary group} of signature $\left(p,q\right)$
is given by 
\begin{equation}
{\bf U}\left(p,q\right)=\left\{ M\in{\bf GL}\left(p+q,\mathbb{C}\right):M^{*}I_{p,q}M=I_{p,q}\right\} ,\label{eq1:-1}
\end{equation}
where $p+q=n$. 
\end{defn}

Note that $I_{p,q}^{2}=I_{n}$ and for $M\in{\bf U}\left(p,q\right)$,
we have $|\det M|=1$. Moreover, ${\bf U}\left(p,0\right)\cong{\bf U}\left(p\right),{\bf U}\left(0,q\right)\cong{\bf U}\left(q\right)$,
and ${\bf U}\left(q,p\right)\cong{\bf U}\left(p,q\right)$. A discussion
on the structure of special types of matrices, including pseudo-unitary
ones, can be found in \cite{MMT03}. For a viewpoint on the pseudo-unitary
group that is more inclined toward pseudo-Euclidean geometry, see
\cite{Ner11}. Without loss of generality, we shall assume $p\leq q$
in this paper.
\begin{defn}
The \textit{special pseudo-unitary group} of signature $\left(p,q\right)$
is given by
\begin{equation}
{\bf SU}\left(p,q\right)=\left\{ M\in{\bf U}\left(p,q\right):\det M=1\right\} .
\end{equation}
\end{defn}

We now mention a few facts about ${\bf U}\left(p,q\right)$ and ${\bf SU}\left(p,q\right)$.
\begin{fact}
For $p+q=n$, we have that
\begin{equation}
{\bf U}\left(p,q\right)\cap{\bf U}\left(n\right)={\bf U}\left(p\right)\oplus{\bf U}\left(q\right).\label{eq2:-1}
\end{equation}
\end{fact}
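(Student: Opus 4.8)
The plan is to prove the two inclusions separately, the key mechanism being that membership in ${\bf U}(n)$ converts the defining relation of ${\bf U}(p,q)$ into a commutation relation with $I_{p,q}$. First I would take an arbitrary $M \in {\bf U}(p,q) \cap {\bf U}(n)$. The unitarity condition $M^* M = I_n$ gives $M^* = M^{-1}$, and substituting this into the pseudo-unitary relation $M^* I_{p,q} M = I_{p,q}$ yields $M^{-1} I_{p,q} M = I_{p,q}$, that is, $I_{p,q} M = M I_{p,q}$. Thus $M$ commutes with $I_{p,q}$.

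The next step is to extract the block structure forced by this commutation. Writing $M$ in block form $M = \begin{pmatrix} A & B \\ C & D \end{pmatrix}$ with $A$ of size $p \times p$ and $D$ of size $q \times q$, and using $I_{p,q} = \text{diag}\{I_p, -I_q\}$, a direct comparison of $I_{p,q} M$ and $M I_{p,q}$ forces $B = 0$ and $C = 0$. Hence $M = \text{diag}\{A, D\}$ is block diagonal. Since $M$ is still unitary, $M^* M = I_n$ decouples into $A^* A = I_p$ and $D^* D = I_q$, so $A \in {\bf U}(p)$ and $D \in {\bf U}(q)$, giving $M \in {\bf U}(p) \oplus {\bf U}(q)$.

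For the reverse inclusion, I would start with $M = \text{diag}\{A, D\}$ where $A \in {\bf U}(p)$ and $D \in {\bf U}(q)$. Such an $M$ is manifestly unitary and block diagonal, so it commutes with $I_{p,q}$; reversing the computation above then gives $M^* I_{p,q} M = M^{-1} I_{p,q} M = I_{p,q}$, so $M \in {\bf U}(p,q)$ as well. This establishes both inclusions and hence the claimed equality.

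There is no serious obstacle here; the only point requiring a moment of care is the first step, recognizing that unitarity permits replacing $M^*$ by $M^{-1}$ and thereby converting the indefinite-metric relation into a commutation with $I_{p,q}$. Everything else is a routine block-matrix computation.
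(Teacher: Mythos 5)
Your proof is correct, and it takes a somewhat cleaner route than the paper's. The paper first derives the full set of block identities (2.5) from $M^{*}I_{p,q}M=I_{p,q}$, uses them to verify an explicit block formula for the inverse, namely $M^{-1}=\begin{pmatrix}M_{11}^{*} & -M_{21}^{*}\\ -M_{12}^{*} & M_{22}^{*}\end{pmatrix}$, and then equates this with $M^{*}$ to force $M_{12}=M_{21}=0$, after which the first two identities in (2.5) give unitarity of the diagonal blocks. You instead substitute $M^{*}=M^{-1}$ directly into the defining relation, converting it into the commutation relation $MI_{p,q}=I_{p,q}M$; comparing blocks kills the off-diagonal parts immediately, and unitarity of the diagonal blocks follows straight from $M^{*}M=I_{n}$ rather than from the pseudo-unitary identities. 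Your argument is shorter and avoids computing the inverse altogether; what the paper's longer computation buys is the block identities (2.5) themselves, which it reuses later (for instance in Example 3.2, where they are applied to describe ${\bf U}_{s}\left(1,1\right)$). You also make the reverse inclusion explicit via the same commutation observation, whereas the paper simply asserts that inclusion is easy; both treatments of that direction are fine.
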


\begin{proof}
It is easy to see that ${\bf U}\left(p\right)\oplus{\bf U}\left(q\right)\subset{\bf U}\left(p,q\right)\cap{\bf U}\left(n\right)$. 

Writing $M=\begin{pmatrix}M_{11} & M_{12}\\
M_{21} & M_{22}
\end{pmatrix}\in{\bf U}\left(p,q\right)$, where $M_{11}$ and $M_{22}$ are matrices of size $p\times p$
and $q\times q$, respectively, and using the identity $M^{*}I_{p,q}M=I_{p,q}$
we have that
\begin{align}
M_{11}^{*}M_{11}-M_{21}^{*}M_{21} & =I_{p},\nonumber \\
M_{21}^{*}M_{21}-M_{22}^{*}M_{22} & =-I_{q},\label{eq3:-1}\\
M_{11}^{*}M_{12}-M_{12}^{*}M_{22} & =M_{21}^{*}M_{11}-M_{22}^{*}M_{21}=0.\nonumber 
\end{align}
Using identities in (2.5), one verifies that 
\[
M^{-1}=\begin{pmatrix}M_{11}^{*} & -M_{21}^{*}\\
-M_{12}^{*} & M_{22}^{*}
\end{pmatrix}.
\]
If $M$ is in the unitary group ${\bf U}\left(n\right)={\bf U}\left(p+q\right)$,
then $M^{*}=M^{-1}$. Therefore, it follows that $M_{12}=M_{21}=0$,
and hence, $M\in{\bf U}\left(p\right)\oplus{\bf U}\left(q\right)$
by the first two equations in (\ref{eq3:-1}).
\end{proof}
\begin{fact}
For any $z,w\in\mathbb{C}^{p,q}$, both ${\bf U}\left(p,q\right)$
and ${\bf SU}\left(p,q\right)$ leave the following indefinite Hermitian
bilinear and quadratic forms invariant:

\begin{equation}
B_{p,q}\left(z,w\right)=\sum_{i=1}^{p}\bar{z}_{i}w_{i}-\sum_{j=p+1}^{n}\bar{z}_{j}w_{j},
\end{equation}

\begin{equation}
Q_{p,q}\left(z\right)=\sum_{i=1}^{p}|z_{i}|^{2}-\sum_{j=p+1}^{n}|z_{j}|^{2}.
\end{equation}
\end{fact}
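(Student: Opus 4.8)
The plan is to recognize that both forms have a clean matrix-level description in terms of $I_{p,q}$, after which the invariance is an immediate consequence of the defining relation $M^{*}I_{p,q}M=I_{p,q}$. First I would note that, viewing $z,w\in\mathbb{C}^{p,q}$ as column vectors, the bilinear form can be written as $B_{p,q}(z,w)=z^{*}I_{p,q}w$; this matches the given summation because $I_{p,q}$ is diagonal with entries $+1$ in the first $p$ slots and $-1$ in the remaining $q$ slots. Similarly, the quadratic form is simply $Q_{p,q}(z)=z^{*}I_{p,q}z=B_{p,q}(z,z)$, so it suffices to establish invariance of $B_{p,q}$.

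Next, for $M\in{\bf U}(p,q)$, I would compute directly
\[
B_{p,q}(Mz,Mw)=(Mz)^{*}I_{p,q}(Mw)=z^{*}\left(M^{*}I_{p,q}M\right)w=z^{*}I_{p,q}w=B_{p,q}(z,w),
\]
where the penultimate equality is precisely the defining relation for ${\bf U}(p,q)$. Specializing to $w=z$ then gives $Q_{p,q}(Mz)=B_{p,q}(Mz,Mz)=B_{p,q}(z,z)=Q_{p,q}(z)$, which establishes invariance of the quadratic form.

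Finally, since ${\bf SU}(p,q)={\bf U}(p,q)\cap{\bf SL}(n,\mathbb{C})$ is a subgroup of ${\bf U}(p,q)$, every element of ${\bf SU}(p,q)$ satisfies the same defining relation, and the invariance carries over verbatim. There is no substantial obstacle here; the computation is routine once the forms are recast in matrix notation. The only point requiring mild care is the bookkeeping of the sesquilinear convention—namely that $B_{p,q}$ is conjugate-linear in its first argument—to ensure that the matrix expression $z^{*}I_{p,q}w$ reproduces the stated sum with the correct pattern of signs and complex conjugations.
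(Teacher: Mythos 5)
Your proof is correct and complete: writing $B_{p,q}(z,w)=z^{*}I_{p,q}w$ and $Q_{p,q}(z)=B_{p,q}(z,z)$ reduces the invariance to the defining relation $M^{*}I_{p,q}M=I_{p,q}$, and the ${\bf SU}(p,q)$ case follows since it is a subgroup of ${\bf U}(p,q)$. The paper states this fact without any proof, treating it as immediate from the definition, and your argument is precisely the routine verification being taken for granted there.
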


\begin{fact}
${\bf SU}\left(1,1\right)\cong{\bf SL}\left(2,\mathbb{R}\right)$
and moreover, $\mathbb{D}\cong{\bf SU}\left(1,1\right)/{\bf U}\left(1\right)$,
where $\mathbb{D}\subset\mathbb{C}$ is the unit disk.
\end{fact}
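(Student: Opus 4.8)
The plan is to treat the two assertions separately, beginning from an explicit normal form for the elements of $\mathbf{SU}(1,1)$, which is what makes both halves computable. First I would specialize the computation in the proof of Fact~2.3 to $p=q=1$: writing $M=\begin{pmatrix} a & b \\ c & d\end{pmatrix}$, the pseudo-unitarity relations give $M^{-1}=\begin{pmatrix} \bar a & -\bar c \\ -\bar b & \bar d\end{pmatrix}$, while $\det M=1$ gives $M^{-1}=\begin{pmatrix} d & -b \\ -c & a\end{pmatrix}$ by the adjugate formula. Comparing entries forces $d=\bar a$ and $c=\bar b$, so that
\[
\mathbf{SU}(1,1)=\left\{\begin{pmatrix} a & b \\ \bar b & \bar a\end{pmatrix} : |a|^2-|b|^2=1\right\}.
\]

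For the isomorphism $\mathbf{SU}(1,1)\cong\mathbf{SL}(2,\mathbb R)$ I would use a fixed Cayley matrix $C=\tfrac{1}{\sqrt 2}\begin{pmatrix} 1 & i \\ i & 1\end{pmatrix}$ and show that conjugation $\Phi(M)=CMC^{-1}$ carries $\mathbf{SU}(1,1)$ onto $\mathbf{SL}(2,\mathbb R)$. Since conjugation by an invertible matrix is automatically an injective group homomorphism and preserves the determinant, the only content is to check that $\Phi(M)$ has real entries and that every real matrix of determinant one arises. Writing $a=a_1+ia_2$ and $b=b_1+ib_2$, a direct multiplication gives $\Phi(M)=\begin{pmatrix} a_1+b_2 & a_2+b_1 \\ b_1-a_2 & a_1-b_2\end{pmatrix}$, whose determinant equals $|a|^2-|b|^2=1$; solving the resulting linear system for $(a_1,a_2,b_1,b_2)$ in terms of the four real target entries establishes surjectivity and pins down $\Phi$ as the desired isomorphism.

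For the homogeneous-space description I would let $\mathbf{SU}(1,1)$ act on $\mathbb D$ by the Möbius transformations $M\cdot z=\frac{az+b}{\bar b z+\bar a}$ and verify three things. First, well-definedness follows from the identity $1-|M\cdot z|^2=\frac{(|a|^2-|b|^2)(1-|z|^2)}{|\bar b z+\bar a|^2}=\frac{1-|z|^2}{|\bar b z+\bar a|^2}$, which shows the action preserves $\mathbb D$, the denominator being nonvanishing there since $|a|>|b|$. Second, transitivity: for $w\in\mathbb D$ the matrix with $a=(1-|w|^2)^{-1/2}$ and $b=w(1-|w|^2)^{-1/2}$ lies in $\mathbf{SU}(1,1)$ and sends $0$ to $w$. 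Third, the stabilizer of $0$ consists exactly of the matrices with $b=0$, i.e.\ $\mathrm{diag}(a,\bar a)$ with $|a|=1$, which is $\mathbf U(1)$. The orbit–stabilizer theorem then yields $\mathbb D\cong\mathbf{SU}(1,1)/\mathbf U(1)$.

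The routine parts are the two explicit matrix computations; the point requiring care is conceptual rather than computational, namely that the second $\cong$ is an identification of homogeneous spaces (a diffeomorphism via the orbit map), not a group isomorphism, since $\mathbf U(1)$ is not normal in $\mathbf{SU}(1,1)$. I would also confirm that the transitivity and stabilizer computations use only the normal form above, so that no appeal to the general structure theory is needed.
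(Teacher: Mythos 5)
Your proof is correct, but it is worth noting that the paper does not actually prove this fact at all: its ``proof'' is the single line ``See \cite{KS09,Mok89}.'' So you have supplied the argument that the authors delegate to the literature, and what you wrote is essentially the standard one found in those references. Your normal form $\mathbf{SU}(1,1)=\bigl\{\begin{pmatrix} a & b \\ \bar b & \bar a\end{pmatrix}: |a|^2-|b|^2=1\bigr\}$ is derived correctly from the inverse formula (which matches the paper's computation in Fact 2.3, since $M^{-1}=I_{1,1}M^{*}I_{1,1}$), the Cayley conjugation computation checks out (the image matrix is real with determinant $|a|^2-|b|^2=1$, and your linear solve gives surjectivity), and the disk identity $1-|M\cdot z|^2=(1-|z|^2)/|\bar b z+\bar a|^2$ together with your transitivity and stabilizer computations gives the homogeneous-space statement. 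You were also right to flag that the second $\cong$ is an identification of a coset space with a manifold via the orbit map, not a group isomorphism. Two points you leave implicit but should state: that $M\mapsto\bigl(z\mapsto\frac{az+b}{\bar b z+\bar a}\bigr)$ is a group action, i.e.\ matrix multiplication corresponds to composition of the fractional linear maps (routine, but it is what licenses the orbit--stabilizer argument), and that upgrading the set-theoretic bijection $\mathbf{SU}(1,1)/\mathbf{U}(1)\to\mathbb{D}$ to a diffeomorphism invokes the standard theorem on smooth transitive actions of Lie groups. Compared with the paper, your route buys self-containedness at the cost of length; the citation buys brevity for a classical fact that plays only a background role in the paper.
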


\begin{proof}
See \cite{KS09,Mok89}.
\end{proof}
\begin{rem}
More quantum dynamical applications for general pseudo-unitary operators
can be found in \cite{Mos04} and references therein.
\end{rem}

\section{Self-adjoint Elements in ${\bf U}\left(p,p\right)$}

Suppose we have a complex Hilbert space $V$ with an inner product
$\langle\cdot,\cdot\rangle$. Let $L:V\rightarrow V$ be a bounded
linear operator. Then its adjoint $L^{*}:V\rightarrow V$ is defined
through the equation
\begin{equation}
\langle Lu,v\rangle=\langle u,L^{*}v\rangle\text{ }\forall u,v\in V.
\end{equation}
$L$ is said to be Hermitian, or self-adjoint, whenever $\langle Lu,v\rangle=\langle u,Lv\rangle$,
which means that $L=L^{*}$. For $V$ finite-dimensional with a given
orthonormal basis, this is the same as saying that the matrix associated
to $L$ is self-adjoint, i.e. equal to its conjugate transpose. We
define ${\bf U}_{s}\left(p,q\right)$ to be the set
\begin{equation}
{\bf U}_{s}\left(p,q\right)=\left\{ M\in{\bf U}\left(p,q\right):M=M^{*}\right\} .\label{eq4:-1}
\end{equation}
 Note that $M\in{\bf U}_{s}\left(p,q\right)$ in (\ref{eq4:-1}) satisfies
the equation $MI_{p,q}M=I_{p,q}$. Moreover, ${\bf U}_{s}\left(p,q\right)$
itself is not a group as the product of two Hermitian matrices is
not necessarily Hermitian, unless the matrices commute. But, as we
shall see, it contains a nontrivial abelian group. The group ${\bf U}\left(p\right)\oplus{\bf U}\left(q\right)$
acts on ${\bf U}_{s}\left(p,q\right)$. Indeed, if $U\in{\bf U}\left(p\right),V\in{\bf U}\left(q\right)$,
then for every $M=\begin{pmatrix}M_{11} & M_{12}\\
M_{12}^{*} & M_{22}
\end{pmatrix}$ in ${\bf U}_{s}\left(p,q\right)$, we have 
\begin{equation}
\hat{M}:=\begin{pmatrix}U^{*} & 0\\
0 & V^{*}
\end{pmatrix}\begin{pmatrix}M_{11} & M_{12}\\
M_{12}^{*} & M_{22}
\end{pmatrix}\begin{pmatrix}U & 0\\
0 & V
\end{pmatrix}=\begin{pmatrix}U^{*}M_{11}U & U^{*}M_{12}V\\
V^{*}M_{12}^{*}U & V^{*}M_{22}V
\end{pmatrix},
\end{equation}
which is obviously self-adjoint. Moreover, one computes easily that
\begin{align*}
\hat{M}I_{p,q}\hat{M} & =\text{diag}\left\{ U^{*},V^{*}\right\} M\text{diag}\left\{ U,V\right\} I_{p,q}\text{diag}\left\{ U^{*},V^{*}\right\} M\text{diag}\left\{ U,V\right\} \\
 & =\text{diag}\left\{ U^{*},V^{*}\right\} MI_{p,q}M\text{diag}\left\{ U,V\right\} \\
 & =\text{diag}\left\{ U^{*},V^{*}\right\} I_{p,q}\text{diag}\left\{ U,V\right\} =I_{p,q}.
\end{align*}

Moreover, if $M\in{\bf U}_{s}\left(p,q\right)$, then clearly $-M\in{\bf U}_{s}\left(p,q\right)$,
hence the group $\left\{ \pm1\right\} $ acts on ${\bf U}_{s}\left(p,q\right)$
as well. These observations justify the following. 
\begin{defn}
Two elements $M_{1},M_{2}\in{\bf U}\left(p,q\right)$ are said to
be \textit{equivalent}, and denoted by $M_{1}\sim M_{2}$, if $M_{1}=\pm M_{2}$
or $M_{1}=Q^{*}M_{2}Q$ for some $Q\in{\bf U}\left(p\right)\oplus{\bf U}\left(q\right)$.
\end{defn}

To gain a better sense of the general case, we first look at the example
of ${\bf U}_{s}\left(1,1\right)$. 
\begin{example}
\noindent $p=q=1$. Assume $M=\begin{pmatrix}m_{11} & m_{12}\\
\bar{m}_{12} & m_{22}
\end{pmatrix}\in{\bf U}_{s}\left(1,1\right)$. Then by (2.5), we have  \begin{equation}   
\begin{aligned}     
m_{11}^{2} - |m_{12}|^{2} & =  1, \\     
|m_{12}|^{2} - m_{22}^{2} & =  -1, \\     
m_{12}(m_{11} - m_{22}) & = 0.   
\end{aligned} 
\end{equation}Clearly, if $m_{12}=0$, then $M=\text{diag}\left\{ \pm1,\pm1\right\} $.
If $m_{12}\neq0$, then $m_{11}=m_{22}$, and we can write 
\[
M=\begin{pmatrix}c & s\\
\bar{s} & c
\end{pmatrix},
\]
 where $s\neq0$ and $c\in\mathbb{R}$ with $c^{2}=1+\left|s\right|^{2}$.
Up to an action by ${\bf U}\left(1\right)\oplus{\bf U}\left(1\right)$
as in (3.10), we may assume $s$ is real and $c\geq1$. Letting $c=\cosh t$
and $s=\sinh t$, we have 
\begin{equation}
M_{t}:=\begin{pmatrix}\cosh t & \sinh t\\
\sinh t & \cosh t
\end{pmatrix},\text{ \text{ \text{ }}}t\in\mathbb{R}.
\end{equation}
Using identities of $\cosh$ and $\sinh$ functions, it is easy to
check that 
\[
M_{t}M_{t^{'}}=M_{t+t^{'}}.
\]
 Hence, $G:=\left\{ M_{t}:t\in\mathbb{R}\right\} $ is a nontrivial
abelian one-parameter group isomorphic to $\left(\mathbb{R},+\right)$.
$G$ can be viewed as a hyperbolic group that is studied in \cite{Bar47,Wol88}.
It is easy to check that $M_{-t}=I_{1,1}M_{t}I_{1,1}$, hence $M_{-t}\sim M_{t}$.
Let $G_{+}=\left\{ M_{t}:t\geq0\right\} $. Then $G_{+}$ is an abelian
semigroup. Moreover, since the trace $\text{Tr}\left(M_{t}\right)=2\cosh t$
is a strictly increasing function on the interval $[0,\infty)$, we
see that no two distinct elements in $G_{+}$ are equivalent. Therefore,
we have $\quotient{\mathbf{U}_{s}(1,1)}{\sim} = G_{+} \cup \{I_{1,1}\}$.
Further, if we let  \begin{align*} {\bf U}_{s+}\left(1,1\right) :=\left\{ M\in{\bf U}_{s}\left(1,1\right): \text{Tr}(M)\geq 2 \right\}, 
\end{align*}then the above observations indicate that ${\bf U}_{s+}\left(1,1\right)$
is the connected component of ${\bf U}_{s}\left(1,1\right)$ that
contains the identity matrix $I_{2}$. Moreover, we have the disjoint
decomposition 
\[
{\bf U}_{s}\left(1,1\right)={\bf U}_{s+}\left(1,1\right)\cup-{\bf U}_{s+}\left(1,1\right)\cup\left\{ \pm I_{1,1}\right\} .
\]
 Furthermore, it is not hard to see that ${\bf SU}_{s}\left(1,1\right)$
is also invariant under the multiplication by $\left\{ \pm1\right\} $
and the action by ${\bf U}\left(1\right)\oplus{\bf U}\left(1\right)$
as defined in (3.10). Hence, the quotient $\quotient{{\bf SU}_{s}\left(1,1\right)}{\sim}$
is well-defined. Since $\det I_{1,1}=-1$ and $\det M=1$ for all
$M\in G_{+}$, we see that $\quotient{{\bf SU}_{s}\left(1,1\right)}{\sim}=G_{+}$.
\end{example}

Now we move on to the general case. Consider an arbitrary $M\in{\bf U}_{s}\left(p,q\right)$.
The fact $MI_{p,q}M=I_{p,q}$ is equivalent to the following equation:
\begin{equation}
\left(M-I_{p,q}\right)I_{p,q}\left(M+I_{p,q}\right)=0.
\end{equation}
 The Sylvester rank inequality then implies the following. 
\begin{cor}
If $M\in{\bf U}_{s}\left(p,q\right)$, then $\textnormal{rank}(M-I_{p,q})+ \textnormal{rank}(M+I_{p,q}) \leq p+q$.
\end{cor}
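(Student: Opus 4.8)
The plan is to build directly on the factored identity (3.13), which the preceding discussion already establishes: every $M \in {\bf U}_{s}(p,q)$ satisfies $(M - I_{p,q})I_{p,q}(M + I_{p,q}) = 0$. Since the right-hand side is the zero matrix, the goal is to read off a bound on the ranks of the two outer factors $M - I_{p,q}$ and $M + I_{p,q}$ from the vanishing of their $I_{p,q}$-weighted product.

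First I would dispose of the middle factor. Because $I_{p,q}^{2} = I_{n}$, the matrix $I_{p,q}$ is its own inverse and hence nonsingular, so left or right multiplication by it leaves rank unchanged. Setting $A := (M - I_{p,q})I_{p,q}$, we therefore have $\textnormal{rank}(A) = \textnormal{rank}(M - I_{p,q})$, and (3.13) collapses to the clean two-factor identity $A(M + I_{p,q}) = 0$.

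Next I would apply the Sylvester rank inequality, which for $n \times n$ matrices $A$ and $B$ asserts $\textnormal{rank}(AB) \geq \textnormal{rank}(A) + \textnormal{rank}(B) - n$. Taking $B = M + I_{p,q}$ and using $AB = 0$, so that $\textnormal{rank}(AB) = 0$, immediately yields $\textnormal{rank}(A) + \textnormal{rank}(M + I_{p,q}) \leq n = p + q$. Substituting $\textnormal{rank}(A) = \textnormal{rank}(M - I_{p,q})$ then gives exactly the asserted inequality.

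Every step is either an equality of ranks or a single invocation of a standard inequality, so I do not anticipate a genuine obstacle. The only point demanding a moment's attention is the bookkeeping that justifies absorbing $I_{p,q}$ into one outer factor without altering the rank, and this rests entirely on the invertibility of $I_{p,q}$.
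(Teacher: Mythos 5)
Your proof is correct and follows exactly the route the paper intends: the paper derives the identity $(M-I_{p,q})I_{p,q}(M+I_{p,q})=0$ and then simply cites the Sylvester rank inequality, which is precisely your argument with the details (absorbing the invertible middle factor $I_{p,q}$, then applying $\textnormal{rank}(AB)\geq\textnormal{rank}(A)+\textnormal{rank}(B)-n$ to a vanishing product) written out. Nothing further is needed.
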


Without loss of generality, we shall assume $\text{rank}\left(M-I_{p,q}\right)\geq\text{rank}\left(M+I_{p,q}\right)$,
as otherwise we may work with $-M$. Then $k:=\text{rank}\left(M+I_{p,q}\right)\leq q$.
Since $M+I_{p,q}$ is Hermitian, using spectral decomposition as in
\cite{Lay99}, we may write
\begin{equation}
M+I_{p,q}=\sum_{i=1}^{k}\lambda_{i}z_{i}z_{i}^{*},\label{eqa:}
\end{equation}
where $\lambda_{1},\lambda_{2},\text{\dots},\lambda_{k}$ are the
nonzero eigenvalues of $M+I_{p,q}$ whose eigenvectors $z_{i}$ are
assumed orthonormal throughout this paper, i.e. $\left\langle z_{i},z_{j}\right\rangle =z_{i}^{*}z_{j}=\delta_{ij}$.\footnote{in the standard Euclidean inner product on $\mathbb{C}^{n}$}
With slight algebraic tweaking we see that $MI_{p,q}M=I_{p,q}$ if
and only if
\begin{equation}
\sum_{i,j=1}^{k}\lambda_{i}\lambda_{j}z_{i}\left(z_{i}^{*}I_{p,q}z_{j}\right)z_{j}^{*}-2\sum_{j=1}^{k}\lambda_{j}z_{j}z_{j}^{*}=0.\label{eq1:}
\end{equation}
 Since $\left\{ z_{j}:1\leq j\leq k\right\} $ is an orthonormal set,
applying the standard inner product on $\mathbb{C}^{n}$ with $z_{j}$
from the right to (\ref{eq1:}), we see that (\ref{eq1:}) is equivalent
to
\begin{equation}
\sum_{i=1}^{k}\lambda_{i}\lambda_{j}z_{i}\left(z_{i}^{*}I_{p,q}z_{j}\right)-2\lambda_{j}z_{j}=0\text{, \text{ }}\forall1\leq j\leq k,
\end{equation}
 and this is true if and only if
\begin{equation}
\begin{cases}
z_{i}^{*}I_{p,q}z_{j}=0; & i\neq j,\\
\lambda_{j}^{2}z_{j}^{*}I_{p,q}z_{j}-2\lambda_{j}=0; & i=j.
\end{cases}\label{eq2:}
\end{equation}
Note that $z_{i}^{*}I_{p,q}z_{j}$ in (\ref{eq2:}), regardless of
whether $i=j$ or not, is a scalar, not a matrix. From the
second equation in (\ref{eq2:}), the nonzero eigenvalues of $M+I_{p,q}$
are $\lambda_{j}=\frac{2}{z_{j}^{*}I_{p,q}z_{j}}$.

If we write vectors $z_{i}=\begin{pmatrix}z_{i}^{+}\\
z_{i}^{-}
\end{pmatrix}$, where $z_{i}^{+}\in\mathbb{C}^{p},z_{i}^{-}\in\mathbb{C}^{q}$,
then for $i\neq j$, from the first equation in (\ref{eq2:}), we
have that  \begin{equation}   
\begin{aligned}
\langle    
\begin{pmatrix}z_{i}^{+} \\
 -z_{i}^{-}
\end{pmatrix},&
\begin{pmatrix}z_{j}^{+}\\
z_{j}^{-}
\end{pmatrix} \rangle =  \langle z_{i}^{+}, z_{j}^{+} \rangle - \langle z_{i}^{-}, z_{j}^{-} \rangle = 0, \\
\langle
\begin{pmatrix}z_{i}^{+} \\
z_{i}^{-}
\end{pmatrix},&
\begin{pmatrix}z_{j}^{+}\\
z_{j}^{-}
\end{pmatrix} \rangle =  \langle z_{i}^{+}, z_{j}^{+} \rangle + \langle z_{i}^{-}, z_{j}^{-} \rangle = 0,      
\end{aligned} \label{eq3:}
\end{equation}where $\left\langle \text{ ,}\text{ }\right\rangle $ stands for the
Euclidean inner product on $\mathbb{C}^{n}$. It follows that $\langle z_{i}^{+},z_{j}^{+}\rangle=0=\langle z_{i}^{-},z_{j}^{-}\rangle$,
so $z_{i}^{+},z_{j}^{+}$ and $z_{i}^{-},z_{j}^{-}$ are orthogonal
pairs for $i\neq j$. This means that $\left\{ z_{j}^{+}\right\} _{j=1}^{k},\left\{ z_{j}^{-}\right\} _{j=1}^{k}$
are orthogonal sets in $\mathbb{C}^{p},\mathbb{C}^{q}$, respectively.
Note that it is possible that for some $j$ we have $z_{j}^{+}=0$
or $z_{j}^{-}=0$. We summarize the above observations as follows. 
\begin{lem}
$M\in{\bf U}_{s}\left(p,q\right)$ if and only if there are orthogonal
sets $\left\{ z_{j}^{+}\right\} _{j=1}^{k}$ and $\left\{ z_{j}^{-}\right\} _{j=1}^{k}$
in $\mathbb{C}^{p}$ and $\mathbb{C}^{q}$, respectively, with $k\leq q$,
$\left\Vert z_{j}^{+}\right\Vert ^{2}+\left\Vert z_{j}^{-}\right\Vert ^{2}=1$,
and $\left\Vert z_{j}^{+}\right\Vert ^{2}\neq\left\Vert z_{j}^{-}\right\Vert ^{2}$
such that either $M$ or $-M$ is of the form
\[
\sum_{j=1}^{k}\lambda_{j}z_{j}z_{j}^{*}-I_{p,q},
\]
 where $z_{j}=\begin{pmatrix}z_{j}^{+}\\
z_{j}^{-}
\end{pmatrix}$ and $\lambda_{j}=\frac{2}{\left\Vert z_{j}^{+}\right\Vert ^{2}-\left\Vert z_{j}^{-}\right\Vert ^{2}}$.
\end{lem}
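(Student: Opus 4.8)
The plan is to read off both directions of the stated equivalence from the computation just performed, so that the proof amounts to assembling the pieces for the forward implication and then checking that the chain of deductions is reversible for the converse. For the forward direction, suppose $M\in\mathbf{U}_{s}(p,q)$; after replacing $M$ by $-M$ if necessary we may assume $\mathrm{rank}(M-I_{p,q})\geq\mathrm{rank}(M+I_{p,q})$, and then the preceding corollary together with the standing hypothesis $p\leq q$ gives $k:=\mathrm{rank}(M+I_{p,q})\leq (p+q)/2\leq q$, which is the asserted bound. Passing from the spectral decomposition \eqref{eqa:} through \eqref{eq1:} to \eqref{eq2:}, I have already split each eigenvector as $z_{j}=\begin{pmatrix}z_{j}^{+}\\ z_{j}^{-}\end{pmatrix}$ and shown that $\{z_{j}^{+}\}$ and $\{z_{j}^{-}\}$ are orthogonal sets. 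It remains only to record three scalar identities: orthonormality of the $z_{j}$ gives $\|z_{j}^{+}\|^{2}+\|z_{j}^{-}\|^{2}=z_{j}^{*}z_{j}=1$; the indefinite form evaluates to $z_{j}^{*}I_{p,q}z_{j}=\|z_{j}^{+}\|^{2}-\|z_{j}^{-}\|^{2}$, so the eigenvalue formula from the second line of \eqref{eq2:} becomes $\lambda_{j}=2/(\|z_{j}^{+}\|^{2}-\|z_{j}^{-}\|^{2})$; and since $\lambda_{j}$ is a nonzero eigenvalue, the relation $\lambda_{j}z_{j}^{*}I_{p,q}z_{j}=2$ forces $z_{j}^{*}I_{p,q}z_{j}\neq 0$, i.e. the strict inequality $\|z_{j}^{+}\|^{2}\neq\|z_{j}^{-}\|^{2}$. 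Rearranging \eqref{eqa:} then yields $M=\sum_{j=1}^{k}\lambda_{j}z_{j}z_{j}^{*}-I_{p,q}$.

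For the converse I would take the data of the statement and verify directly that $N:=\sum_{j=1}^{k}\lambda_{j}z_{j}z_{j}^{*}-I_{p,q}$ lies in $\mathbf{U}_{s}(p,q)$; the case in which $-M$ has this form is covered by the earlier observation that $\mathbf{U}_{s}(p,q)$ is stable under $M\mapsto -M$. Hermiticity of $N$ is immediate, since each $z_{j}z_{j}^{*}$ and $I_{p,q}$ are Hermitian and $\lambda_{j}=2/(\|z_{j}^{+}\|^{2}-\|z_{j}^{-}\|^{2})$ is real. For pseudo-unitarity, the orthogonality of $\{z_{j}^{+}\}$ and $\{z_{j}^{-}\}$ gives, for $i\neq j$, both $z_{i}^{*}z_{j}=\langle z_{i}^{+},z_{j}^{+}\rangle+\langle z_{i}^{-},z_{j}^{-}\rangle=0$ and $z_{i}^{*}I_{p,q}z_{j}=\langle z_{i}^{+},z_{j}^{+}\rangle-\langle z_{i}^{-},z_{j}^{-}\rangle=0$ as in \eqref{eq3:}, while the normalization gives $z_{j}^{*}z_{j}=1$; hence $\{z_{j}\}$ is orthonormal and the first line of \eqref{eq2:} holds. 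The prescribed value of $\lambda_{j}$ makes the second line hold too, because $\lambda_{j}^{2}z_{j}^{*}I_{p,q}z_{j}-2\lambda_{j}=\lambda_{j}(\lambda_{j}z_{j}^{*}I_{p,q}z_{j}-2)=0$. Running \eqref{eq2:} back to \eqref{eq1:} and thence to the identity $NI_{p,q}N=I_{p,q}$, and using $N=N^{*}$, I obtain $N^{*}I_{p,q}N=I_{p,q}$, so that $N\in\mathbf{U}(p,q)$ (invertibility being automatic from $(\det N)^{2}=1$); together with self-adjointness this places $N$ in $\mathbf{U}_{s}(p,q)$.

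The main obstacle is not any single computation but ensuring that the converse is a genuine equivalence rather than a one-way implication. In particular, the reduction of \eqref{eq1:} to \eqref{eq2:} was obtained by pairing with $z_{j}$ and using that the $z_{j}$ form an orthonormal set; in the converse I must therefore first re-establish that orthonormality from the hypotheses on the halves $z_{j}^{\pm}$ before I am entitled to run that step in reverse. The remaining points are bookkeeping: matching the nonvanishing of each $\lambda_{j}$ precisely to the strict inequality $\|z_{j}^{+}\|^{2}\neq\|z_{j}^{-}\|^{2}$, and checking that the sign ambiguity between $M$ and $-M$ aligns with the $\pm$ in the statement.
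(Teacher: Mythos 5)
Your proposal is correct and follows essentially the same route as the paper: the lemma there is stated as a summary of the preceding chain of equivalences (the spectral decomposition of $M+I_{p,q}$, the reduction of $MI_{p,q}M=I_{p,q}$ to the conditions $z_{i}^{*}I_{p,q}z_{j}=0$ for $i\neq j$ and $\lambda_{j}^{2}z_{j}^{*}I_{p,q}z_{j}=2\lambda_{j}$, and the splitting into the halves $z_{j}^{\pm}$), which is exactly what you assemble. Your only addition is to spell out the converse explicitly---re-deriving orthonormality of the $z_{j}$ from the hypotheses on $z_{j}^{\pm}$ and verifying $NI_{p,q}N=I_{p,q}$ directly---which the paper leaves implicit in its ``if and only if'' phrasing; this is a legitimate and welcome bit of extra care, not a different method.
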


The following corollary is immediate.
\begin{cor}
If $M\in{\bf U}_{s}\left(p,q\right)$ and $\lambda\neq0$ is an eigenvalue
of $M+I_{p,q}$, then $|\lambda|\geq2$.
\end{cor}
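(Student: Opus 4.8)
The plan is to obtain the bound directly from the eigenvalue formula established just before Lemma 3.5, rather than re-deriving anything from scratch. First I would recall that every nonzero eigenvalue $\lambda$ of $M+I_{p,q}$ can be written as $\lambda = 2/(z^{*}I_{p,q}z)$, where $z$ is a corresponding unit eigenvector; this is exactly the expression for $\lambda_j$ recorded just before Lemma 3.5. Splitting $z$ into its top and bottom blocks $z^{+}\in\mathbb{C}^{p}$ and $z^{-}\in\mathbb{C}^{q}$, I would then compute $z^{*}I_{p,q}z = \|z^{+}\|^{2}-\|z^{-}\|^{2}$, while the normalization $z^{*}z=1$ gives $\|z^{+}\|^{2}+\|z^{-}\|^{2}=1$.

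The key step is the elementary inequality $\bigl|\,\|z^{+}\|^{2}-\|z^{-}\|^{2}\,\bigr| \le \|z^{+}\|^{2}+\|z^{-}\|^{2}=1$, which holds because both $\|z^{+}\|^{2}$ and $\|z^{-}\|^{2}$ are nonnegative. Since $\lambda\neq0$ forces $z^{*}I_{p,q}z\neq0$, I may take reciprocals to conclude $|\lambda| = 2/\bigl|z^{*}I_{p,q}z\bigr| \ge 2$.

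I do not expect any genuine obstacle here: the corollary is an immediate consequence of the formula $\lambda_j = 2/(\|z_j^{+}\|^{2}-\|z_j^{-}\|^{2})$ combined with the unit-norm constraint $\|z_j^{+}\|^{2}+\|z_j^{-}\|^{2}=1$, both of which are already recorded in Lemma 3.5. The only point needing even a moment's attention is that the denominator does not vanish, and this is guaranteed precisely by the hypothesis $\|z_j^{+}\|^{2}\neq\|z_j^{-}\|^{2}$ (equivalently $\lambda\neq0$) appearing in that lemma. Consequently the result is best presented as a short two-line deduction rather than a standalone argument.
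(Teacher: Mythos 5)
Your proof is correct and is essentially the argument the paper intends: the paper calls the corollary ``immediate'' from Lemma 3.4 precisely because $\lambda_j=\frac{2}{\left\Vert z_{j}^{+}\right\Vert ^{2}-\left\Vert z_{j}^{-}\right\Vert ^{2}}$ together with $\left\Vert z_{j}^{+}\right\Vert ^{2}+\left\Vert z_{j}^{-}\right\Vert ^{2}=1$ forces $\bigl|\left\Vert z_{j}^{+}\right\Vert ^{2}-\left\Vert z_{j}^{-}\right\Vert ^{2}\bigr|\leq1$ and hence $|\lambda_j|\geq2$, which is exactly your deduction. No gap; your two-line presentation matches the paper's level of detail.
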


\begin{example}
We now look at the case $p=1,q=2$. Assume $M\in{\bf U}_{s}\left(1,2\right)$.
Then there are four cases. If $\text{rank}\left(M+I_{1,2}\right)=0$,
then clearly $M=-I_{1,2}$. Next, $\text{rank}\left(M+I_{1,2}\right)=1$
if and only if 
\begin{equation}
M=\lambda zz^{*}-I_{1,2},
\end{equation}
 where $z=\begin{pmatrix}z_{1}\\
z_{2}\\
z_{3}
\end{pmatrix}$ such that $\left\Vert z\right\Vert =1$, and
\begin{equation}
\lambda z^{*}I_{1,2}z=\lambda\left(\left|z_{1}\right|^{2}-\left|z_{2}\right|^{2}-\left|z_{3}\right|^{2}\right)=2.
\end{equation}
 If $\text{rank}\left(M+I_{1,2}\right)=2$, then by Corollary 3.3,
$\text{rank}\left(M-I_{1,2}\right)\leq1$, and hence, $-M$ fits into
the above discussion. Finally, if $\text{rank}\left(M+I_{1,2}\right)=3$,
then we have that $\text{rank}\left(M-I_{1,2}\right)=0$ , and hence,
$M=I_{1,2}$.\medskip{}

Now we are in the position to state and prove the main theorem of
this paper.
\end{example}

\begin{thm}
For every integer $p\geq1$ we have that\[\quotient{{\bf U}_{s}\left(p,p\right)}{\sim} =\underbrace{(G_{+}\cup\{I_{1,1}\}) \oplus (G_{+}\cup\{I_{1,1}\}) \oplus \cdots \oplus (G_{+}\cup\{I_{1,1}\})}_{p}.\] 
\end{thm}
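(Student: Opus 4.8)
The plan is to show that every class in $\quotient{{\bf U}_{s}(p,p)}{\sim}$ contains a representative that is a direct sum of $p$ blocks from ${\bf U}_{s}(1,1)$, and then to feed each block into the $p=1$ computation of Example 3.2. Starting from an arbitrary $M\in{\bf U}_{s}(p,p)$, after replacing $M$ by $-M$ if necessary (permitted, since $M\sim -M$) I would invoke Lemma 3.4 to write $M=\sum_{j=1}^{k}\lambda_{j}z_{j}z_{j}^{*}-I_{p,p}$ with $z_{j}=\left(\begin{smallmatrix}z_{j}^{+}\\ z_{j}^{-}\end{smallmatrix}\right)$, where (because $q=p$) $\{z_{j}^{+}\}_{j=1}^{k}\subset\mathbb{C}^{p}$ and $\{z_{j}^{-}\}_{j=1}^{k}\subset\mathbb{C}^{p}$ are orthogonal families, $\|z_{j}^{+}\|^{2}+\|z_{j}^{-}\|^{2}=1$, $\|z_{j}^{+}\|^{2}\neq\|z_{j}^{-}\|^{2}$, $k\le p$, and $\lambda_{j}=2/(\|z_{j}^{+}\|^{2}-\|z_{j}^{-}\|^{2})$.

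The central step is a simultaneous alignment of the two orthogonal families by a single element of the acting group. Since $\{z_{j}^{+}:z_{j}^{+}\neq0\}$ is an orthogonal set indexed by distinct $j\in\{1,\dots,k\}\subseteq\{1,\dots,p\}$, there is $U\in{\bf U}(p)$ with $U^{*}z_{j}^{+}=\|z_{j}^{+}\|e_{j}$ for every such $j$, and independently a $V\in{\bf U}(p)$ with $V^{*}z_{j}^{-}=\|z_{j}^{-}\|e_{j}$. Replacing $M$ by $Q^{*}MQ$ with $Q=U\oplus V\in{\bf U}(p)\oplus{\bf U}(p)$ (again allowed under $\sim$) turns each $z_{j}$ into $\left(\begin{smallmatrix}a_{j}e_{j}\\ b_{j}e_{j}\end{smallmatrix}\right)$ with $a_{j}=\|z_{j}^{+}\|\ge0$ and $b_{j}=\|z_{j}^{-}\|\ge0$. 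Because each vector is now supported on the single index $j$ in each slab, $M$ becomes block diagonal after the reordering that pairs coordinate $j$ with $p+j$: writing $d_{j}=a_{j}^{2}-b_{j}^{2}$ and using $\lambda_{j}=2/d_{j}$, the slot $(j,p+j)$ carries the $2\times2$ block $\tfrac{1}{d_{j}}\left(\begin{smallmatrix}1 & \sqrt{1-d_{j}^{2}}\\ \sqrt{1-d_{j}^{2}} & 1\end{smallmatrix}\right)$, while every unused slot with $j>k$ carries the block $-I_{1,1}$. Each such block is self-adjoint and lies in ${\bf U}(1,1)$, i.e. in ${\bf U}_{s}(1,1)$; the hypothesis $d_{j}\neq0$ makes it well defined, and the bound $|d_{j}|\le 1$ recovers $|\lambda_{j}|\ge2$ as in Corollary 3.5.

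With $M$ reduced to a direct sum of $p$ blocks from ${\bf U}_{s}(1,1)$, I would finish by applying the $p=1$ classification. Setting $\cosh t_{j}=1/|d_{j}|\ge1$ and $\sinh t_{j}=\sqrt{1-d_{j}^{2}}/|d_{j}|\ge0$ identifies a block with $d_{j}>0$ as $M_{t_{j}}\in G_{+}$, a block with $d_{j}=\pm1$ as $\pm M_{0}=\pm I_{2}$, and the unused blocks as $\pm I_{1,1}$; by Example 3.2 each block is therefore $\sim$-equivalent, inside its own ${\bf U}_{s}(1,1)$, to an element of $G_{+}\cup\{I_{1,1}\}$. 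Permutation matrices lie in ${\bf U}(p)\oplus{\bf U}(p)$, so the blocks may be freely reordered, and the class of $M$ is represented by a direct sum of $p$ elements of $G_{+}\cup\{I_{1,1}\}$, which is the asserted form. For the reverse direction, that distinct canonical data give inequivalent matrices, I would use that conjugation by ${\bf U}(p)\oplus{\bf U}(p)$ and the global sign act on the eigenvalue multiset of $M$ only by the identity or a global negation: within each block the parameter is recovered from $\cosh t_{j}=\tfrac12|\operatorname{Tr}B_{j}|$, which is strictly increasing in $t_{j}\ge0$ exactly as noted in Example 3.2, and the grouping of the $\pm1$ eigenvalues into $\pm I_{2}$ versus $I_{1,1}$ blocks is likewise a conjugation invariant.

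The hard part is the simultaneous alignment together with the sign bookkeeping it triggers. Producing one pair $(U,V)$ that diagonalizes both slabs at once is what forces the clean $2\times2$ structure, and the degenerate indices with $z_{j}^{+}=0$ or $z_{j}^{-}=0$ must be handled with care so that the split-off block is exactly $\pm I_{2}$ and the remaining vectors genuinely live in the complementary ${\bf U}_{s}(p-1,p-1)$; this is cleanest to organize as an induction on $p$, peeling off one aligned block at a time with Example 3.2 as the base case. The more delicate point is that the $\pm$ in the definition of $\sim$ is a single global sign, whereas Example 3.2 is being applied slot by slot; reconciling the per-block representatives in $G_{+}\cup\{I_{1,1}\}$ with only one global sign, using the freedom to permute blocks together, is the step that most needs to be pinned down to reach the stated normal form.
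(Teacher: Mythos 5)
Your argument is, up to its final step, the same as the paper's own proof: reduce by Lemma 3.4, align the two orthogonal families $\{z_j^{+}\}$, $\{z_j^{-}\}$ by a single $U\oplus V\in{\bf U}(p)\oplus{\bf U}(p)$ so that $M$ splits into $2\times 2$ blocks indexed by $j$ --- your block $\frac{1}{d_j}\bigl(\begin{smallmatrix}1 & \sqrt{1-d_j^{2}}\\ \sqrt{1-d_j^{2}} & 1\end{smallmatrix}\bigr)$ is exactly the paper's $\lambda_j K_j-I_{1,1}$, and the unused slots carry $-I_{1,1}$ --- and then classify each block through Example 3.2. That part is correct and matches the paper's computation.

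The problem is precisely the step you single out at the end: reconciling the per-block signs with the single global sign allowed by Definition 3.1. This is not bookkeeping that can be pinned down; it is a genuine obstruction, and the stated normal form is not reachable in general. Take $p=2$ and $M=M_t\otimes P_1+(-M_s)\otimes P_2$ with $t,s\geq 0$; for $t=s=0$ this is $\mathrm{diag}(1,-1,1,-1)$. Each block lies in ${\bf U}_s(1,1)$, so $MI_{2,2}M=I_{2,2}$ and $M\in{\bf U}_s(2,2)$. Now note that conjugation by any $U\oplus V$ preserves the spectrum of the upper-left $p\times p$ corner of an element of ${\bf U}_s(p,p)$, while the global sign negates that spectrum. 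For every $N_1\otimes P_1+N_2\otimes P_2$ with $N_1,N_2\in G_{+}\cup\{I_{1,1}\}$, the corner is $\mathrm{diag}(n_1,n_2)$ with $n_1,n_2\geq 1$, so its spectrum lies in $[1,\infty)$, or in $(-\infty,-1]$ after the sign flip; but the corner spectrum of $M$ above is $\{\cosh t,\,-\cosh s\}$, which meets both half-lines. Hence this $M$ is equivalent to no element of $(G_{+}\cup\{I_{1,1}\})\oplus(G_{+}\cup\{I_{1,1}\})$: per-block signs are honest invariants, and a correct normal form must allow blocks from $G_{+}\cup(-G_{+})\cup\{\pm I_{1,1}\}$ modulo permutations and one overall sign. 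Be aware that the paper's own proof jumps over exactly this point (it closes by saying the theorem ``follows from the discussion in Example 3.2,'' which applies the sign blockwise), so what you flagged is a defect of the theorem as stated rather than a gap you failed to close. A final caution: your uniqueness argument via the total eigenvalue multiset is also too weak --- $\mathrm{diag}(1,-1,1,-1)$ and $I_{2,2}$ have equal spectra yet are inequivalent --- whereas the corner spectrum used above does separate them.
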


\begin{proof}
We show $M\in{\bf U}_{s}\left(p,p\right)$ if and only if there are
$M_{j}\in{\bf U}_{s}\left(1,1\right),1\leq j\leq p$, such that 
\[
M\sim M_{1}\oplus M_{2}\oplus\cdots\oplus M_{p}.
\]
 The theorem then follows from the discussion in Example 3.2.

In Lemma 3.4, since $\left\{ z_{j}^{+}\right\} _{j=1}^{k}$ and $\left\{ z_{j}^{-}\right\} _{j=1}^{k}$
are orthogonal sets, there exist orthonormal bases $\left\{ e_{j}^{'}\right\} _{j=1}^{p}$
and $\left\{ f_{j}^{'}\right\} _{j=1}^{q}$ for $\mathbb{C}^{p}$
and $\mathbb{C}^{q}$, respectively, such that 
\[
z_{j}^{+}=\left\Vert z_{j}^{+}\right\Vert e_{j}^{'},\text{ \text{ \text{ }}}z_{j}^{-}=\left\Vert z_{j}^{-}\right\Vert f_{j}^{'},\text{ \text{ \text{ }}}1\leq j\leq k.
\]
 Then there exist $U\in{\bf U}\left(p\right)$ and $V\in{\bf U}\left(q\right)$
such that $Ue_{j}^{'}=e_{j}$ and $Vf_{j}^{'}=f_{j}$, where 
\[
e_{j}=\begin{pmatrix}0\\
\vdots\\
1\\
\vdots\\
0
\end{pmatrix}\in\mathbb{C}^{p},f_{j}=\begin{pmatrix}0\\
\vdots\\
1\\
\vdots\\
0
\end{pmatrix}\in\mathbb{C}^{q},
\]
with 1 in the $j$-th position. Setting $\alpha_{j}=\left\Vert z_{j}^{+}\right\Vert $
and $\beta_{j}=\left\Vert z_{j}^{-}\right\Vert $ , we have $\alpha_{j}^{2}+\beta_{j}^{2}=z_{j}^{*}z_{j}=1$.
This allows us to consider $M+I_{p,q}$ under the equivalence $\sim$,
i.e.
\begin{equation}
M+I_{p,q}\sim\begin{pmatrix}U & 0\\
0 & V
\end{pmatrix}\left(M+I_{p,q}\right)\begin{pmatrix}U^{*} & 0\\
0 & V^{*}
\end{pmatrix}.\label{eq4:}
\end{equation}
In view of Lemma 3.4, expanding out the RHS of (\ref{eq4:}), we have:\begin{equation}
\begin{pmatrix}U & 0\\ 
0 & V \end{pmatrix}
\left(M+I_{p,q}\right)
\begin{pmatrix}U^{*} & 0\\ 
0 & V^{*} \end{pmatrix} =
\sum_{j=1}^{k} \lambda_{j}
\begin{pmatrix} \underbrace{\alpha_{j}^{2}e_{j}e_{j}^{*}}_{p \times p} & \underbrace{\alpha_{j}\beta_{j}e_{j}f_{j}^{*}}_{p \times q}\\ 
\underbrace{\alpha_{j}\beta_{j}f_{j}e_{j}^{*}}_{q \times p} & \underbrace{\beta_{j}^{2}f_{j}f_{j}^{*}}_{q \times q} \end{pmatrix}.
\label{eq5:} 
\end{equation}

Now we focus on the case $p=q$. Then $e_{j}=f_{j}$ so that, in view
of (\ref{eq5:}), we have equations  \begin{align}		 
\begin{pmatrix}U & 0\\  
0 & V \end{pmatrix} \left(M+I_{p,p}\right) 
\begin{pmatrix}U^{*} & 0\\  0 & V^{*} \end{pmatrix} &= \sum_{j=1}^{k} \lambda_{j}  
\begin{pmatrix} \alpha_{j}^{2} & \alpha_{j}\beta_{j}\\ \alpha_{j}\beta_{j} & \beta_{j}^{2} \end{pmatrix} \otimes e_{j}e_{j}^{*},  \label{eq13:}\\ \begin{pmatrix}U & 0\\  0 & V \end{pmatrix} I_{p,p} \begin{pmatrix}U^{*} & 0\\  0 & V^{*} \end{pmatrix} &=I_{p,p}=\sum_{j=1}^{p}  \begin{pmatrix} 1 & 0\\ 0 & -1 \end{pmatrix} \otimes e_{j}e_{j}^{*}, \label{eq13':} \end{align}where $U,V\in{\bf U}\left(p\right)$. Note that the index for the
summation in (3.23) is up to $k$, and that in (3.24) it is up to
$p$. Since $P_{j}:=e_{j}e_{j}^{*}$ is the rank 1 projection onto
$\mathbb{C}e_{j}$, we have
\[
\sum_{j=1}^{p}P_{j}=I_{p},\text{ \text{ \text{ }}}P_{i}P_{j}=0\text{ for\text{ }}i\neq j.
\]
 Letting $K_{j}=\begin{pmatrix}\alpha_{j}^{2} & \alpha_{j}\beta_{j}\\
\alpha_{j}\beta_{j} & \beta_{j}^{2}
\end{pmatrix}$, and subtracting (\ref{eq13':}) from (\ref{eq13:}), we have  \begin{equation} \begin{pmatrix}U & 0\\  0 & V \end{pmatrix} M \begin{pmatrix}U^{*} & 0\\  0 & V^{*} \end{pmatrix}=\sum_{j=1}^k\left(\lambda_{j}K_{j}-I_{1,1}\right)\otimes P_j-\sum_{j=k+1}^pI_{1,1}\otimes P_j.\label{eq15:} \end{equation}
Let
\[
K=\begin{pmatrix}\alpha^{2} & \alpha\beta\\
\alpha\beta & \beta^{2}
\end{pmatrix},\text{ \text{ \text{ }}}\lambda=\frac{2}{\alpha^{2}-\beta^{2}},
\]
 where $\alpha,\beta\in\mathbb{R}$ satisfy $\alpha^{2}+\beta^{2}=1$
and $\alpha\neq\pm\beta$. Setting  \begin{align*} m_{11}=\frac{1}{2\alpha^{2}-1},\\ 
m_{12}=\frac{2\alpha\beta}{\alpha^{2}-\beta^{2}},\\  
m_{22}=\frac{1}{1-2\beta^{2}}, \end{align*}one verifies that the matrix $\lambda K-I_{1,1}=(m_{ij})$ satisfies
(3.11) and hence is in ${\bf U}_{s}\left(1,1\right)$. It then follows
from (3.25) that 
\[
M\sim\sum_{j=1}^{p}M_{j}\otimes P_{j},
\]
for some $M_{j}\in{\bf U}_{s}\left(1,1\right),1\leq j\leq p$. 

Conversely, if $M_{j}\in{\bf U}_{s}\left(1,1\right),1\leq j\leq p$,
and 
\[
M=\sum_{j=1}^{p}M_{j}\otimes P_{j},
\]
 then using (3.24), it is easy to check that $MI_{p,p}M=I_{p,p}$,
i.e., $M\in{\bf U}_{s}\left(p,p\right)$. The theorem thus follows
from the discussion in Example 3.2.
\end{proof}
Since $G$ is an abelian one-parameter group in ${\bf U}_{s}\left(1,1\right)$,
the above observations also yield
\begin{cor}
For every integer $p\geq1$, the set ${\bf U}_{s}\left(p,p\right)$
contains an abelian $p$-parameter group that is isomorphic to $\underbrace{G\oplus G\oplus \cdots \oplus G}_{p}.$
\end{cor}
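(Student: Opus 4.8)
The plan is to exhibit the asserted group concretely as the ``tensor\nobreakdash-diagonal'' copies of $G$ inside ${\bf U}_{s}(p,p)$ that already surfaced in the proof of Theorem~3.7. Recall from Example~3.2 that $G=\{M_{t}:t\in\mathbb{R}\}\subset{\bf U}_{s}(1,1)$ satisfies $M_{t}M_{t'}=M_{t+t'}$, so $G\cong(\mathbb{R},+)$. For $t_{1},\dots,t_{p}\in\mathbb{R}$ I would set
\[
\Phi(t_{1},\dots,t_{p}):=\sum_{j=1}^{p}M_{t_{j}}\otimes P_{j},\qquad P_{j}=e_{j}e_{j}^{*},
\]
and define $G_{p}:=\{\Phi(t_{1},\dots,t_{p}):t_{1},\dots,t_{p}\in\mathbb{R}\}$. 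Since each $M_{t_{j}}\in G\subset{\bf U}_{s}(1,1)$, the converse direction already verified in the proof of Theorem~3.7 gives $\Phi(t_{1},\dots,t_{p})\in{\bf U}_{s}(p,p)$, so $G_{p}\subset{\bf U}_{s}(p,p)$.

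The heart of the argument is a single multiplication identity, resting entirely on the orthogonality relations $P_{i}P_{j}=\delta_{ij}P_{j}$ and $\sum_{j=1}^{p}P_{j}=I_{p}$ recorded in the proof of Theorem~3.7. Using the mixed-product property of the Kronecker product and $M_{s}M_{t}=M_{s+t}$, I would compute
\[
\Phi(s_{1},\dots,s_{p})\,\Phi(t_{1},\dots,t_{p})
=\sum_{i,j=1}^{p}(M_{s_{i}}M_{t_{j}})\otimes(P_{i}P_{j})
=\sum_{j=1}^{p}M_{s_{j}+t_{j}}\otimes P_{j}
=\Phi(s_{1}+t_{1},\dots,s_{p}+t_{p}).
\]
This one identity does all the work: it shows that $G_{p}$ is closed under multiplication (so although ${\bf U}_{s}(p,p)$ is not itself a group, the products of these pairwise commuting Hermitian matrices remain Hermitian and hence stay in ${\bf U}_{s}(p,p)$), that $\Phi(0,\dots,0)=I_{2}\otimes\sum_{j}P_{j}=I_{2p}$ is the identity, that $\Phi(t_{1},\dots,t_{p})^{-1}=\Phi(-t_{1},\dots,-t_{p})$, and that the product is symmetric in its two arguments, whence $G_{p}$ is abelian.

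It then remains only to pin down the isomorphism type. The displayed product rule says exactly that $\Phi\colon(\mathbb{R}^{p},+)\to G_{p}$ is a surjective group homomorphism. For injectivity I would read off the $(1,1)$ block of $\Phi(t_{1},\dots,t_{p})$, which with the Kronecker convention equals $\mathrm{diag}\{\cosh t_{1},\dots,\cosh t_{p}\}$; thus $\Phi(t_{1},\dots,t_{p})=I_{2p}$ forces $\cosh t_{j}=1$, i.e. $t_{j}=0$ for every $j$, so $\ker\Phi=\{0\}$. Hence $\Phi$ is an isomorphism and $G_{p}\cong\mathbb{R}^{p}\cong\underbrace{G\oplus\cdots\oplus G}_{p}$, the desired abelian $p$-parameter group.

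Since the construction and every verification collapse to the one block-multiplication identity, there is no genuine obstacle here; the only points demanding care are the bookkeeping of the $2\times 2$-by-$p\times p$ tensor factors and the remark that closure inside ${\bf U}_{s}(p,p)$ is \emph{not} automatic but holds precisely because these particular elements commute, so their products remain self-adjoint. One could alternatively phrase the whole argument as transporting the evident product group $G^{\oplus p}\subset{\bf U}_{s}(1,1)^{\oplus p}$ through the embedding $A_{1}\oplus\cdots\oplus A_{p}\mapsto\sum_{j}A_{j}\otimes P_{j}$, but the direct computation above is the most economical route.
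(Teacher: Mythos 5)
Your proof is correct and follows essentially the same route the paper intends: the corollary is stated there as an immediate consequence of the proof of Theorem 3.7, namely embedding $G^{\oplus p}$ into ${\bf U}_{s}(p,p)$ via $(t_{1},\dots,t_{p})\mapsto\sum_{j=1}^{p}M_{t_{j}}\otimes P_{j}$, exactly your map $\Phi$. You simply make explicit the details the paper leaves implicit (the mixed-product computation, closure, and injectivity via the $\cosh t_{j}$ diagonal), which is a faithful elaboration rather than a different argument.
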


Like that in Example 3.2, it is not hard to see that ${\bf SU}_{s}\left(2,2\right)$
is also invariant under the multiplication by $\left\{ \pm1\right\} $
and the action by ${\bf U}\left(2\right)\oplus{\bf U}\left(2\right)$
as defined in (3.10). Hence, the next corollary follows readily from
Theorem 3.7.
\begin{cor}
 $\quotient{{\bf SU}_{s}\left(2,2\right)}{\sim}=\left(G_+\oplus G_+\right)\cup \{I_{1,1}\oplus I_{1,1}\}$.
\end{cor}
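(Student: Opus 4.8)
The plan is to leverage Theorem 3.7 and reduce the statement to a bookkeeping argument with determinants. First I would observe that for $\mathbf{U}(2,2)$ the dimension $n=p+q=4$ is even, which forces the equivalence $\sim$ to preserve the determinant. Indeed, if $M_1 = Q^* M_2 Q$ with $Q\in\mathbf{U}(2)\oplus\mathbf{U}(2)$, then $\det M_1 = \overline{\det Q}\,(\det M_2)\,\det Q = |\det Q|^2\det M_2 = \det M_2$ since $Q$ is unitary, while $\det(-M)=(-1)^4\det M=\det M$. Consequently $\mathbf{SU}_s(2,2)$ is $\sim$-invariant, exactly as asserted in the paragraph preceding the statement, the quotient $\quotient{\mathbf{SU}_s(2,2)}{\sim}$ is well-defined, and the task becomes that of singling out, among the classes of $\quotient{\mathbf{U}_s(2,2)}{\sim}$ furnished by Theorem 3.7, precisely those consisting of matrices of determinant one.

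Next I would invoke Theorem 3.7 in the case $p=2$: every class in $\quotient{\mathbf{U}_s(2,2)}{\sim}$ has a representative of the form $M = M_1\otimes P_1 + M_2\otimes P_2$ with $M_1,M_2\in G_+\cup\{I_{1,1}\}$, where $P_1,P_2$ are the orthogonal rank-one projections onto $\mathbb{C}e_1$ and $\mathbb{C}e_2$. Because $P_1,P_2$ are orthogonal and sum to $I_2$, the matrix $M$ splits as a block-diagonal operator acting as $M_1$ on one invariant two-dimensional subspace and as $M_2$ on its complement; hence $\det M = \det M_1\cdot\det M_2$. From Example 3.2 one has $\det M_t = \cosh^2 t-\sinh^2 t = 1$ for each $M_t\in G_+$, whereas $\det I_{1,1}=-1$.

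Finally, the requirement $M\in\mathbf{SU}_s(2,2)$, i.e. $\det M_1\,\det M_2=1$, leaves exactly two possibilities: both $M_j$ lie in $G_+$, giving $\det M = 1\cdot 1$, or both equal $I_{1,1}$, giving $\det M = (-1)(-1)$; the mixed case yields $\det M=-1$ and is excluded. Since by Theorem 3.7 these two possibilities correspond to genuinely distinct classes of $\quotient{\mathbf{U}_s(2,2)}{\sim}$ and $\sim$ preserves the determinant, they persist and stay distinct after passing to $\mathbf{SU}_s(2,2)$, yielding $\quotient{\mathbf{SU}_s(2,2)}{\sim} = (G_+\oplus G_+)\cup\{I_{1,1}\oplus I_{1,1}\}$. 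I expect the only point demanding genuine care to be the multiplicativity $\det M = \det M_1\,\det M_2$ for the representation $M_1\otimes P_1 + M_2\otimes P_2$, together with the even-dimensionality observation that renders $\sim$ determinant-preserving; the structural heavy lifting having already been carried out in Theorem 3.7, the remainder is essentially accounting with determinants.
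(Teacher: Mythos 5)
Your proposal is correct and follows essentially the same route as the paper: the paper also notes that ${\bf SU}_{s}\left(2,2\right)$ is invariant under $\left\{ \pm1\right\}$ and the ${\bf U}\left(2\right)\oplus{\bf U}\left(2\right)$ action (so the quotient is well-defined) and then deduces the result from Theorem 3.7 together with the determinant observation $\det M_{t}=1$, $\det I_{1,1}=-1$ already used for ${\bf SU}_{s}\left(1,1\right)$ in Example 3.2. Your write-up merely makes explicit the details the paper leaves implicit, namely that $\sim$ preserves determinants (using that $n=4$ is even) and that $\det\left(M_{1}\otimes P_{1}+M_{2}\otimes P_{2}\right)=\det M_{1}\det M_{2}$.
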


\section{${\bf U}_{s}\left(p,q\right)$, Lie Algebras, and Exponential Map}

In this section we take a look at ${\bf U}_{s}\left(p,q\right)$ from
a Lie algebra point of view. Consider the Lie algebra $\frak{u}\left(p,q\right)$
of the Lie group ${\bf U}\left(p,q\right)$ and the associated exponential
map $\exp:\frak{u}\left(p,q\right)\to{\bf U}\left(p,q\right)$. For
every $T\in\frak{u}\left(p,q\right)$, we consider the one-parameter
subgroup
\begin{equation}
M\left(t\right)=\exp\left(tT\right),\text{ }T=\begin{pmatrix}T_{11} & T_{12}\\
T_{21} & T_{22}
\end{pmatrix},\text{ }t\in\mathbb{R}.\label{eq7:}
\end{equation}
Since we need $M\left(t\right)\in{\bf U}\left(p,q\right)$, we have
that \begin{equation}
 \exp(tT^{*})I_{p,q} \exp(tT) = I_{p,q}. 
 \label{eq8:}
\end{equation}Differentiating (\ref{eq8:}) with respect to $t$, we have 
\[
T^{*}\exp\left(tT^{*}\right)I_{p,q}\exp\left(tT\right)+\exp\left(tT^{*}\right)I_{p,q}T\exp\left(tT\right)=0.
\]
Evaluating the above at $t=0$, we obtain the linear equation \begin{equation}
T^{*}I_{p,q} + I_{p,q}T = 0.
 \label{eq9:}
\end{equation}This shows that the Lie algebra 
\[
\frak{u}\left(p,q\right)=\left\{ T\in{\bf M}_{n}\left(\mathbb{C}\right):T^{*}I_{p,q}=-I_{p,q}T\right\} .
\]
 Clearly, $\exp\left(T\right)$ is self-adjoint if and only if $T$
is self-adjoint. Hence, we set 
\[
\frak{u}_{s}\left(p,q\right):=\left\{ T\in\frak{u}\left(p,q\right):T=T^{*}\right\} .
\]
 Then $\exp\left(T\right)\in{\bf U}_{s}\left(p,q\right)$ if and only
if $T\in\frak{u}_{s}\left(p,q\right)$. Substituting $T$ from (4.26)
into (4.28) and solving $T=T^{*}$, we see that $T_{11}=T_{22}=0$
and $T_{21}=T_{12}^{*}$. Letting $\hat{T}=T_{12}$, we have the following.
\begin{cor}
\noindent Let $T\in{\bf M}_{n}\left(\mathbb{C}\right)$. Then the
following are equivalent.

\noindent (a) $\exp\left(T\right)\in{\bf U}_{s}\left(p,q\right)$.

\noindent (b) $T\in\frak{u}_{s}\left(p,q\right)$.

\noindent (c) $T$ is of the form $\begin{pmatrix}0 & \hat{T}\\
\hat{T}^{*} & 0
\end{pmatrix},\text{ }\hat{T}\in{\bf M}_{pq}\left(\mathbb{C}\right)$, where ${\bf M}_{pq}\left(\mathbb{C}\right)$ is the algebra of $p\times q$
complex matrices.
\end{cor}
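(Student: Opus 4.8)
The plan is to treat (b) as the hinge and prove the equivalence along (c)$\Leftrightarrow$(b)$\Rightarrow$(a), closing the loop with (a)$\Rightarrow$(b). The equivalence (b)$\Leftrightarrow$(c) is the genuine content and is pure linear algebra; (b)$\Rightarrow$(a) is a formal property of the exponential map; and (a)$\Rightarrow$(b) is the one direction I expect to require real care.

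First I would settle (b)$\Leftrightarrow$(c). Condition (b) unpacks to the two requirements $T=T^*$ and the Lie-algebra relation $T^*I_{p,q}=-I_{p,q}T$ from (4.28). Feeding $T^*=T$ into the second relation collapses it to the anticommutation $TI_{p,q}+I_{p,q}T=0$. Writing $T$ in the block form of (4.26) and computing directly, $TI_{p,q}+I_{p,q}T=\begin{pmatrix} 2T_{11} & 0\\ 0 & -2T_{22}\end{pmatrix}$, so the off-diagonal blocks cancel automatically while the diagonal blocks force $T_{11}=T_{22}=0$; Hermiticity then gives $T_{21}=T_{12}^*$, and setting $\hat{T}=T_{12}\in\mathbf{M}_{pq}(\mathbb{C})$ produces exactly the shape in (c). Conversely, a matrix of the form in (c) is visibly self-adjoint and visibly anticommutes with $I_{p,q}$, hence satisfies both defining relations of $\mathfrak{u}_s(p,q)$, so (c)$\Rightarrow$(b) is immediate.

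Next, (b)$\Rightarrow$(a) is standard. If $T\in\mathfrak{u}_s(p,q)\subset\mathfrak{u}(p,q)$, then $\exp(T)\in\mathbf{U}(p,q)$ because the exponential sends a Lie algebra into its Lie group, while $\exp(T)^*=\exp(T^*)=\exp(T)$ since $T=T^*$; together these say $\exp(T)\in\mathbf{U}_s(p,q)$, which is (a).

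The hard part will be (a)$\Rightarrow$(b). Self-adjointness of $H:=\exp(T)$ yields $\exp(T^*)=\exp(T)$, and one is tempted to cancel the exponentials to read off $T=T^*$; this is precisely the step that requires care, since $\exp$ is not injective and so $\exp(T^*)=\exp(T)$ alone forces $T^*$ and $T$ to agree only modulo the kernel of $\exp$. The route I would pursue is to argue through $H$ rather than through $T$: as a self-adjoint element of $\mathbf{U}(p,q)$, the matrix $H$ has its eigenstructure pinned down by Lemma 3.4 (and, when $p=q$, by Theorem 3.7), and I would recover $T$ as a self-adjoint logarithm of $H$ and then read off the anti-diagonal form (c) from the spectral data. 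The delicate point, which I expect to be the genuine obstacle, is justifying the selection of that logarithm: one must work in the regime where $\exp$ is injective — $T$ near the zero matrix, so that $H$ is positive definite — in order to guarantee that the logarithm produced is genuinely the self-adjoint matrix of form (c) rather than some other preimage differing by $\ker\exp$.
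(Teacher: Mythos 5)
Your treatment of (b)$\Leftrightarrow$(c) and of (b)$\Rightarrow$(a) is correct and is essentially the paper's own argument: the paper likewise substitutes the block form (4.26) into the relation $T^{*}I_{p,q}=-I_{p,q}T$ together with $T=T^{*}$ and reads off $T_{11}=T_{22}=0$, $T_{21}=T_{12}^{*}$.

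The direction (a)$\Rightarrow$(b), which you rightly isolated as the delicate one, is where the genuine problem lies --- but it is not a gap your route (or any route) can close, because the implication is false as stated. Take $T=2\pi i I_{n}$. Then $\exp(T)=e^{2\pi i}I_{n}=I_{n}\in{\bf U}_{s}(p,q)$, yet $T^{*}=-T\neq T$, so $T\notin\mathfrak{u}_{s}(p,q)$ (note that this $T$ even lies in $\mathfrak{u}(p,q)$). Your idea of selecting a self-adjoint logarithm of $H=\exp(T)$ can show at best that $H$ admits \emph{some} preimage in $\mathfrak{u}_{s}(p,q)$ (and only when $H$ is positive definite); it cannot identify that preimage with the given $T$, and the counterexample shows it need not be. You should also know that the paper itself glosses over exactly this point: its proof rests on the sentence ``Clearly, $\exp(T)$ is self-adjoint if and only if $T$ is self-adjoint,'' and the ``only if'' half of that sentence fails for the same $T$. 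So your honesty about the obstacle is warranted, and the fix must come from restating the corollary rather than from a cleverer proof. Two standard repairs: (i) replace (a) by the one-parameter statement ``$\exp(tT)\in{\bf U}_{s}(p,q)$ for all $t\in\mathbb{R}$,'' so that differentiating $\exp(tT^{*})=\exp(tT)$ and $\exp(tT^{*})I_{p,q}\exp(tT)=I_{p,q}$ at $t=0$ yields $T=T^{*}$ and $T^{*}I_{p,q}+I_{p,q}T=0$; or (ii) assume $T=T^{*}$ at the outset, in which case $\exp(T)I_{p,q}\exp(T)=I_{p,q}$ gives $\exp(I_{p,q}TI_{p,q})=\exp(-T)$, and injectivity of $\exp$ on Hermitian matrices forces $I_{p,q}TI_{p,q}=-T$, i.e.\ the anticommutation relation. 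Under either repair, your (b)$\Leftrightarrow$(c) computation completes the proof.
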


Corollary 4.1(c), in particular, implies that $\frak{u}_{s}\left(p,q\right)$
is of complex dimension $pq$. Since $\exp$ is a homeomorphism from
an open neighborhood of $0\in\frak{u}_{s}\left(p,q\right)$ to an
open neighborhood of $I\in{\bf U}_{s}\left(p,q\right)$, the generic
dimension of ${\bf U}_{s}\left(p,q\right)$ is $pq$ as well. By direct
computation we have 
\begin{equation}
M=\exp\left(T\right)=\begin{pmatrix}\cosh|\hat{T}^{*}| & U\sinh|\hat{T}|\\
\sinh|\hat{T}|U^{*} & \cosh|\hat{T}|
\end{pmatrix},
\end{equation}
 where $|A|$ stands for $\sqrt{A^{*}A}$ for any matrix $A$ and
$T=U|T|$ is the polar decomposition of $T$. In view of Example 3.2,
where we considered the case $p=q=1$, we have 
\begin{equation}
T=\begin{pmatrix}0 & \xi\\
\bar{\xi} & 0
\end{pmatrix}\in\frak{u}_{s}\left(1,1\right),\text{ \text{ \text{ }}}\xi=|\xi|e^{i\theta},\text{ }0\leq\theta\leq2\pi,
\end{equation}
 and using (4.29), we obtain 
\begin{equation}
\exp\left(T\right)=M=\begin{pmatrix}\cosh|\xi| & e^{i\theta}\sinh|\xi|\\
e^{-i\theta}\sinh|\xi| & \cosh|\xi|
\end{pmatrix}.
\end{equation}

This calculation and Example 3.2 lead to the following.
\begin{cor}
$\exp\left(\frak{u}_{s}\left(1,1\right)\right)={\bf U}_{s+}\left(1,1\right)$.
Moreover, it follows that $$\quotient{\exp (\mathfrak{u}_{s}(1,1))}{\sim} =G_{+}.$$
\end{cor}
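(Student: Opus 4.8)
The plan is to establish the set equality $\exp(\mathfrak{u}_s(1,1)) = \mathbf{U}_{s+}(1,1)$ by proving two inclusions, and then to read off the quotient statement directly from the classification of $\mathbf{U}_s(1,1)$ already obtained in Example 3.2. The two tools I would rely on are the explicit exponential formula (4.31), which parametrizes the image of $\exp$ completely, and the normal form $M = \begin{pmatrix} c & s \\ \bar s & c \end{pmatrix}$ with $c = \sqrt{1+|s|^2}$ for those elements of $\mathbf{U}_s(1,1)$ having nonzero off-diagonal entry.

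For the inclusion $\exp(\mathfrak{u}_s(1,1)) \subseteq \mathbf{U}_{s+}(1,1)$, I would take $T \in \mathfrak{u}_s(1,1)$ in the form (4.30) and read off from (4.31) that $M = \exp(T)$ has $\mathrm{Tr}(M) = 2\cosh|\xi| \geq 2$; combined with Corollary 4.1, which already guarantees $\exp(T) \in \mathbf{U}_s(1,1)$, this places $M$ in $\mathbf{U}_{s+}(1,1)$ by definition. For the reverse inclusion I would start from an arbitrary $M \in \mathbf{U}_{s+}(1,1)$. By Example 3.2 the trace condition forces either $M = I_2$ or $M = \begin{pmatrix} c & s \\ \bar s & c \end{pmatrix}$ with $s \neq 0$ and $c = \sqrt{1+|s|^2} \geq 1$. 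Writing $s = |s|e^{i\theta}$ and letting $|\xi|$ be the unique nonnegative real with $\cosh|\xi| = c$, so that $\sinh|\xi| = \sqrt{c^2-1} = |s|$, the matrix $T = \begin{pmatrix} 0 & \xi \\ \bar\xi & 0 \end{pmatrix}$ with $\xi = |\xi|e^{i\theta}$ lies in $\mathfrak{u}_s(1,1)$ and satisfies $\exp(T) = M$ by (4.31); the case $M = I_2$ is recovered by $|\xi| = 0$. This proves surjectivity onto $\mathbf{U}_{s+}(1,1)$.

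Finally, for the quotient I would observe that the action of $\mathbf{U}(1)\oplus\mathbf{U}(1)$ preserves the trace and reduces any $M = \begin{pmatrix} c & s \\ \bar s & c \end{pmatrix}$ to the normal form $M_t$ with $s$ real and $t \geq 0$, hence into $G_+ \subseteq \mathbf{U}_{s+}(1,1)$. Moreover the multiplication-by-$(-1)$ part of $\sim$ maps $\mathbf{U}_{s+}(1,1)$ outside itself, since $\mathrm{Tr}(-M) = -\mathrm{Tr}(M) \leq -2$, so it never identifies two elements of the image with one another. Since $\mathrm{Tr}(M_t) = 2\cosh t$ is strictly increasing on $[0,\infty)$ by Example 3.2, distinct elements of $G_+$ remain inequivalent, and therefore $G_+$ is a complete and irredundant set of representatives, giving $\quotient{\exp(\mathfrak{u}_s(1,1))}{\sim} = G_+$. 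The one point requiring care, and the step I would regard as the main obstacle, is verifying that restricting $\sim$ to the subset $\mathbf{U}_{s+}(1,1)$ neither splits nor merges classes relative to the full quotient $\quotient{\mathbf{U}_s(1,1)}{\sim}$; this is precisely what the trace bookkeeping above settles, as it shows the $\pm$ identification never acts within $\mathbf{U}_{s+}(1,1)$ while every class still meets $G_+$ in exactly one point.
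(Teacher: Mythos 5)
Your proposal is correct and follows essentially the same route as the paper: the paper deduces the corollary from the explicit exponential formula (4.31) together with the normal form and trace analysis of Example 3.2, which is exactly the content of your two inclusions and your quotient bookkeeping. Your write-up merely makes explicit (surjectivity via solving $\cosh|\xi|=c$, $\sinh|\xi|=|s|$, and the observation that the $\pm$ identification never acts within $\mathbf{U}_{s+}(1,1)$) what the paper leaves implicit in the phrase ``this calculation and Example 3.2 lead to the following.''
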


For the general case, using the singular value decomposition, for
$\hat{T}\in{\bf M}_{pq}\left(\mathbb{C}\right)$, there exist unitary
$U\in{\bf M}_{p}\left(\mathbb{C}\right)$ and $V\in{\bf M}_{q}\left(\mathbb{C}\right)$
such that 
\[
U^{*}\hat{T}V=\text{diag}\left\{ s_{1},s_{2},\text{\dots,}s_{p}\right\} ,
\]
 where $s_{j}\in\mathbb{R}$ are singular numbers of $T$. Note that
it is possible $s_{j}=0$ for some $j$. Then, we have 
\begin{align*}
\begin{pmatrix}U^{*} & 0\\
0 & V^{*}
\end{pmatrix}T\begin{pmatrix}U & 0\\
0 & V
\end{pmatrix} & =\begin{pmatrix}0 & U^{*}\hat{T}V\\
V^{*}\hat{T}^{*}U & 0
\end{pmatrix}\\
 & =\begin{pmatrix}0 & \text{diag}\left\{ s_{1},\text{\dots,}s_{p}\right\} \\
\text{diag}\left\{ s_{1},\text{\dots,}s_{p}\right\} ^{*} & 0
\end{pmatrix}.
\end{align*}

So in the case $p=q$, using the $P_{j}$s from (3.24), we have that
\[
T\sim\sum_{j=1}^{p}\begin{pmatrix}0 & s_{j}\\
s_{j} & 0
\end{pmatrix}\otimes P_{j}.
\]
 Exponentiating the above equivalence identity gives us
\[
\exp\left(T\right)\sim\prod_{j=1}^{p}\begin{pmatrix}\cosh s_{j} & \sinh s_{j}\\
\sinh s_{j} & \cosh s_{j}
\end{pmatrix}\otimes P_{j}.
\]
 This gives another view on Theorem 3.7, although the map $\exp$
is not surjective. The next corollary follows from Corollary 4.2.
\begin{cor}
For every integer $p\geq1$, we have $$\quotient{\exp(\mathfrak{u}_{s}(p,p))}{\sim} = \underbrace{G_{+} \oplus \cdots \oplus G_{+}}_{p}.$$ 
\end{cor}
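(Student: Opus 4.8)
The plan is to assemble the one-dimensional computation of Corollary 4.2 slot-by-slot, using the singular-value reduction carried out immediately above together with the direct-sum bookkeeping already set up in the proof of Theorem 3.7. First I would start from the equivalence
\[
\exp(T)\sim\prod_{j=1}^{p}M_{s_{j}}\otimes P_{j},\qquad M_{s_{j}}=\begin{pmatrix}\cosh s_{j} & \sinh s_{j}\\ \sinh s_{j} & \cosh s_{j}\end{pmatrix},
\]
valid for every $T\in\mathfrak{u}_{s}(p,p)$, where $s_{1},\dots,s_{p}\geq0$ are the singular numbers of $\hat{T}$ and the $P_{j}$ are the orthogonal rank-one projections of (3.24). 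Since $s_{j}\geq0$, each block $M_{s_{j}}$ is exactly the matrix $M_{t}$ of Example 3.2 at $t=s_{j}$, and therefore lies in $G_{+}$; in particular a degenerate slot $s_{j}=0$ contributes $M_{0}=I_{2}\in G_{+}$. Thus every class in $\quotient{\exp(\mathfrak{u}_{s}(p,p))}{\sim}$ has a representative that corresponds, in the notation of Theorem 3.7, to a $p$-tuple all of whose entries lie in $G_{+}$.

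To obtain the reverse containment I would argue surjectivity directly: given any tuple $(s_{1},\dots,s_{p})$ with $s_{j}\geq0$, set $\hat{T}=\text{diag}\{s_{1},\dots,s_{p}\}$ and $T=\begin{pmatrix}0 & \hat{T}\\ \hat{T}^{*} & 0\end{pmatrix}\in\mathfrak{u}_{s}(p,p)$; the block formula (4.29) then yields $\exp(T)\sim\prod_{j}M_{s_{j}}\otimes P_{j}$, realizing the prescribed element. Combining this with Corollary 4.2, which identifies the contribution of a single slot as $\quotient{\exp(\mathfrak{u}_{s}(1,1))}{\sim}=G_{+}$, and stacking the $p$ slots exactly as in Theorem 3.7, gives $\quotient{\exp(\mathfrak{u}_{s}(p,p))}{\sim}=\underbrace{G_{+}\oplus\cdots\oplus G_{+}}_{p}$.

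The step deserving the most care, and the only place where this corollary genuinely differs from Theorem 3.7, is the disappearance of the $\{I_{1,1}\}$ summands. In Theorem 3.7 each slot ranges over $G_{+}\cup\{I_{1,1}\}$, the point $I_{1,1}$ arising from the rank-deficient indices $k<j\leq p$, which contribute a $-I_{1,1}$ block. Here no slot can be equivalent to $I_{1,1}$: every block $M_{s_{j}}$ has trace $2\cosh s_{j}\geq2$, whereas $\text{Tr}(I_{1,1})=0$, and since $\sim$ preserves the trace up to sign this forces $M_{s_{j}}\not\sim I_{1,1}$. This is precisely the failure of $\exp$ to be onto ${\bf U}_{s}(p,p)$ noted above, now quantified as the loss of exactly those $\{I_{1,1}\}$ factors. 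Finally, to confirm that the quotient is genuinely $p$ independent copies of $G_{+}$ with no further collapsing, I would invoke the strict monotonicity of $t\mapsto2\cosh t$ on $[0,\infty)$ from Example 3.2, which guarantees that distinct singular numbers produce inequivalent blocks.
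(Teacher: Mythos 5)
Your proposal is correct and takes essentially the same route as the paper: the paper obtains $\exp(T)\sim\prod_{j=1}^{p}\begin{pmatrix}\cosh s_{j} & \sinh s_{j}\\ \sinh s_{j} & \cosh s_{j}\end{pmatrix}\otimes P_{j}$ via the singular value decomposition and then simply asserts that the corollary follows from Corollary 4.2. Your extra details---the explicit surjectivity construction with $\hat{T}=\text{diag}\{s_{1},\dots,s_{p}\}$ and the trace argument explaining why the $\{I_{1,1}\}$ summands of Theorem 3.7 disappear---only flesh out what the paper leaves implicit.
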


\section{Concluding Remarks}

The pseudo-unitary group ${\bf U}\left(p,q\right)$ is a subject of
many studies as in \cite{IKP92, MMT03, Mok89, Mos04, Ner11}. However, 
the structure of Hermitian (or self-adjoint) matrix elements inside this 
group is not known to have been thoroughly investigated. Using techniques 
from linear algebra, vector analysis, and Lie theory, Theorem 3.7 and Corollary 
4.3 give an explicit picture of ${\bf U}_{s}\left(p,p\right)$. The proof of 
Theorem 3.7, which relies on (3.23) and (3.24), doesn't seem to have an easy 
generalization to the $p\neq q$ case. 

While this paper has not developed a full-scale classification scheme
for describing all of ${\bf U}_{s}\left(p,q\right)$, we are encouraged
and motivated by the results obtained so far. As a possible application of this paper,
self-adjoint elements in ${\bf U}\left(p,p\right)$ could be useful
in the study of Clifford algebras which are widely used in the physics literature (see  
\cite{Lou01, Por95, Shi10, Sny97}). For instance, the group ${\bf CU}\left(p,q\right)$ of $c$-unitary elements in the complex Clifford algebra $C\ell (p,q)$ is isomorphic to the pseudo-unitary group
${\bf U}\left(2^{m-1},2^{m-1}\right)$ if $q\neq 0$ and $p+q=2m$ (\cite{Sny97}).  More recently, links have
been established between Clifford algebras of signature $(p,q)$ associated with 
${\bf O} \left(p,q\right)$ and certain equations at the heart of modern physics such as the
Yang-Mills equations and the Proca equation (\cite{MS16, Shi18}). 

Although technically more challenging, extending the study of this paper to self-adjoint elements in ${\bf U} \left(p,q\right)$ is a promising next step. Furthermore, describing self-adjoint elements
in some other classical groups such as the indefinite symplectic group ${\bf Sp}\left(p,q\right)$ and the spin group ${\bf Spin}\left(p,q\right)$ is also an appealing subject of study. What is even more interesting, however, is to find applications of this study to Clifford algebras, group representation theory, geometry, or even quantum physics.

\section{Acknowledgments}

The authors would like to thank Professor Oleg Lunin for valuable comments on the initial draft of this paper. The first author is also grateful
to the support from the Department of Mathematics and Statistics in SUNY at Albany for
providing him an opportunity to pursue his research interests.

\end{document}